\theoremstyle{definition}\newtheorem{Def}{Definition}
\theoremstyle{plain}\newtheorem{Th}{Theorem}
\theoremstyle{plain}
\theoremstyle{remark}\newtheorem{Rem}{Fact}
\theoremstyle{plain}\newtheorem{Le}{Lemma}
\theoremstyle{plain}
\theoremstyle{plain}\newtheorem{Prop}{Proposition}
\newcommand{\Cii}[1]{_{{}_{\scriptstyle #1}}}
\newcommand{\cii}[1]{_{{}_{#1}}}
\newcommand{\BMO}{\mathrm{BMO}}
\newcommand{\df}{\buildrel\mathrm{def}\over=}
\newcommand{\Hold}{\dot{\mathcal{C}}}
\newcommand{\R}{\mathbb R}
\newcommand{\Z}{\mathbb Z}
\newcommand{\I}{\mathcal I}
\newcommand{\M}[3]{M_{#1,\,#2}^{#3}\,}
\DeclareMathOperator{\supp}{supp}
\author{Eugenia Malinnikova}
\author{Nikolay N. Osipov}
\thanks{Nikolay N. Osipov is supported by ERCIM ``Alain Bensoussan'' Fellowship Programme and by RFBR (grant no. 14-01-31163 and no. 14-01-00198).}
\address{St.~Petersburg Department of Steklov Mathematical Institute RAS, Fontanka 27, St.~Petersburg, Russia}
\email{nicknick AT pdmi DOT ras DOT ru}
\address{Norwegian University of Science and Technology (NTNU), Department of Mathematical Sciences, N-7491, Trondheim, Norway}
\email{eugenia.malinnikova AT ntnu DOT no}
\title[Two types of Rubio de Francia operators]{Two types of Rubio de Francia operators on Triebel--Lizorkin and Besov spaces}
\keywords{Rubio de Francia inequality, Triebel--Lizorkin spaces, Besov spaces, rotation-invariant norms}
\begin{document}

\begin{abstract}
	We discuss generalizations of Rubio de Francia's inequality for
Trie\-bel--Li\-zor\-kin and Besov spaces, continuing the research from
\cite{Os}. Two versions of  Rubio de Francia's
operator are discussed:  
it is shown that a rotation factor is needed for the  boundedness of the
operator in some smooth spaces while it is not essential in other spaces.
We study the operators on some ``end'' spaces of the Trie\-bel--Li\-zor\-kin scale
and then use usual
interpolation methods.
\end{abstract}

\maketitle

\section{Introduction}
If $f$ is a function in $L^2([0,1])$ and $I$ is an interval in $\mathbb{Z}$, then 
by $(M_{I}f)(x)$ we denote the exponential polynomial 
$
	(\widehat f \mathds{1}_{I})^{\vee}(x) = \sum_{n\in I} \widehat f(n) e^{2\pi i nx}.
$
For any collection $\mathcal{I}$ of pairwise disjoint intervals $I \subset \mathbb{Z}$ such that 
$\bigcup_{I\in\mathcal{I}} I = \mathbb{Z}$, we have
\begin{equation}\label{PId}
	\|f\|_{L^2} =\Big\|\Big(\sum_{I\in\mathcal{I}} |M_{I}f|^2 \Big)^{1/2}\Big\|_{L^2}.
\end{equation}
This is an equivalent reformulation\footnote{We are talking about the equivalence of two correct 
statements in the sense that they are direct consequences of each other.} of Parseval's 
identity, one of the most fundamental results in harmonic analysis.
For brevity, we can write the right expression in~\eqref{PId} as $\big\|\{M_{I}f\}\cii{I\in\mathcal{I}}\big\|_{L^2(l^2)}$.

In form~\eqref{PId}, Parseval's identity has an extension to the spaces $L^p([0,1])$. Namely, for $2 \le p < \infty$, we have the following
two-sided inequality:
\begin{equation}\label{ClsInq}
	c_p\big\|\{M_{I}f\}\cii{I\in\mathcal{I}}\big\|_{L^p(l^2)} \le \|f\|_{L^p} \le C_p \big\|\{M_{J}f\}\cii{J\in\mathcal{J}}\big\|_{L^p(l^2)},
\end{equation}
where $\mathcal{I}$ is an arbitrary collection of pairwise disjoint intervals in $\mathbb{Z}$, the collection $\mathcal{J}$ is defined as
\begin{equation*}
	\mathcal{J} \df \big\{(-2^{k+1},-2^k]\big\}_{k\in\mathbb{Z}^+}\cup\big\{\{0\}\big\}\cup\big\{[2^k,2^{k+1})\big\}_{k\in\mathbb{Z}^+}\,,
\end{equation*} 
and the constants $c_p$ and $C_p$ depend only on $p$ (in particular, $c_p$ does not depend on the choice of $\mathcal{I}$). The left inequality have been obtained by Rubio de Francia~\cite{Ru} in 1983, and the right inequality is the classical Littlewood--Paley 
theorem (see, e.g., the exposition in~\cite{St}). 
By duality, if we interchange the left and the right expressions in~\eqref{ClsInq}, we obtain correct estimates 
for $1 < p \le 2$, provided $\bigcup_{I\in\mathcal{I}} I = \mathbb{Z}$. 

In what follows, we consider the whole line~$\mathbb{R}$ instead of~$[0,1]$. 
In such a context, the Fourier transform $\widehat f$ is also defined on~$\mathbb{R}$ 
(so we consider collections of intervals on~$\mathbb{R}$) and
relation~\eqref{ClsInq} remains true, provided $k$ runs over the whole~$\mathbb{Z}$ in the definition of~$\mathcal{J}$. 
In fact, the corresponding results are usually presented precisely in this form (see~\cite{Ru, St}).

Next, we note that $L^p$-classes do not exhaust the set of spaces studied in harmonic analysis. 
In addition to them, there are many normed spaces that seem, at first glance, to have no direct connection with each other:
Sobolev spaces, the $\BMO$-space, H\"older--Zygmund classes of smooth functions, etc. 
But it is known that the corresponding norms can be written in a uniform way: 
all these spaces belong to the scale of Triebel--Lizorkin and Besov spaces. 
In this article, we outline an overall picture: we discuss generalizations of Rubio de Francia's inequality for a substantial part of Besov--Triebel--Lizorkin scale (which includes all of the spaces listed above).
In this general context, we raise and answer a subtle question concerning the presence or absence of the rotations in the operators that correspond to Rubio de Francia's inequality.

Now, let $\mathcal{I} = \{I_m\} = \big\{[a_m,b_m]\big\}$ be a finite or countable collection
of pairwise disjoint intervals in~$\mathbb{R}$ such that 
\begin{equation}\label{ZeroNotContained}
	0 \notin (a_m,b_m)
\end{equation} 
for any~$m$. 
Suppose $\varphi$ is a Schwartz function such that 
$
	\supp \widehat\varphi \subset (0,1)
$
(in particular, $\supp \widehat\varphi$ is separated from~$0$ and~$1$).
We introduce the functions~$\varphi_m$ corresponding to the intervals~$I_m$:
\begin{equation}\label{phim}
	\widehat\varphi_m(t) = \widehat\varphi\bigg(\frac{t - a_m}{b_m - a_m}\bigg).
\end{equation}
Consider two operators that transform scalar-valued functions to collections of functions by the following formulas:
\begin{equation}\label{Oper}
	S\cii{\mathcal{I}}^{\varphi}f\,(x) \df \big\{(f\ast\varphi_m)(x)\big\}_m\quad\mbox{and}\quad \widetilde{S}\cii{\mathcal{I}}^{\varphi}f\,(x) \df \big\{e^{-2\pi i\, a_m x}(f\ast\varphi_m)(x)\big\}_m.
\end{equation}
Also we introduce two corresponding families of operators
$$
	\mathfrak{S}_{\varphi} \df \big\{S\cii{\mathcal{I}}^{\varphi}\big\}_{\mathcal{I}}\quad\mbox{and}\quad 
	\widetilde{\mathfrak{S}}_{\varphi} \df \big\{\widetilde{S}\cii{\mathcal{I}}^{\varphi}\big\}_{\mathcal{I}},
$$
where $\mathcal{I}$ runs over all possible collections of pairwise disjoint intervals in~$\mathbb{R}$ 
satisfying~\eqref{ZeroNotContained}.

The fact that for $2 \le p < \infty$  
the family~${\mathfrak{S}}_{\varphi}$ is uniformly bounded from $L^p$ to $L^p(l^2)$ is a version of Rubio de Francia's
theorem where we have substituted smooth multipliers~$\varphi_m$ instead of $\mathds{1}_{I_m}$.\footnote{In the original form his result cannot
 be extended to some of the Besov and Triebel--Lizorkin spaces (see~\cite{Os}). 
In this article, we do not want to touch on issues that arise when dealing with non-smooth multipliers.} 
Its proof is contained in considerations of~\cite{Ru}. In fact, Rubio de Francia deals with the family~$\widetilde{\mathfrak{S}}_{\varphi}$. The matter is that the factors $e^{-2\pi i\, a_m x}$ played a significant role in the proof: their presence allows to get a Calder\'on--Zygmund type condition for the kernels of~$\widetilde{S}\cii{\mathcal{I}}^{\varphi}$. 
But since the $L^p$-norms are invariant under multiplications by unimodular functions and, in particular, are rotation-invariant, the exponential functions can be dropped. 
Now we note that the norms in all the other Trie\-bel--Li\-zor\-kin spaces as well as in the Besov spaces are not rotation-invariant. 
Therefore the boundedness of the families~$\mathfrak{S}_{\varphi}$ and~$\widetilde{\mathfrak{S}}_{\varphi}$ should be studied separately on such spaces. 

Some studies concerning the family with rotations can be found in~\cite{Os}, where the author considers pointwise estimates 
for the operators~$\widetilde{S}\cii{\mathcal{I}}^{\varphi}$ in terms of sharp (oscillatory) maximal functions. 
In particular, the results of~\cite{Os} imply that $\widetilde{\mathfrak{S}}_{\varphi}$ is uniformly bounded on the H\"older--Zygmund spaces $\Hold_s$ as well as on $\BMO$. But it turns out that in 
the context of the Besov--Triebel--Lizorkin scale those pointwise estimates give much more: we are going to rely heavily on them in our considerations below.

The family~$\mathfrak{S}_{\varphi}$ is also studied below. In particular, we are going to show that it is not bounded on~$\Hold_s$ or~$\BMO$. But surprisingly, it turns out that the both of our families 
are uniformly bounded on some other Trie\-bel--Li\-zor\-kin and Besov spaces with the norms that are not rotation-invariant.

\section{Preliminaries}
\subsection{Triebel--Lizorkin and Besov spaces.} We restrict ourselves to considering only functions on the real line~$\mathbb{R}$.
Let $\mathcal{S}$, $\mathcal{S}'$, and $\mathcal{P}$ be Schwartz space,  the space of tempered distributions, and the space of all 
algebraic polynomials respectively.

Consider a function $\phi \in \mathcal{S}$ such that $\supp \widehat \phi \subset [-2, 2]$ and $\widehat \phi \equiv 1$ on $[-1,1]$. If we introduce functions $\phi_j$ by the formula
\begin{equation}\label{unity}
	\widehat{\phi}_j(\xi) = \widehat{\phi}\big(2^{-j}\xi\big) - \widehat{\phi}\big(2^{-j+1}\xi\big),\quad j \in \mathbb{Z},
\end{equation}
then the collection $\{\phi_j\}_{j \in \mathbb{Z}}$ will be \emph{a resolution of unity}, i.e., we will have
\begin{equation}\label{phijsupp}
 \supp\widehat{\phi}_j \subset \big[-2^{j+1},\, -2^{j-1}\big] \cup \big[2^{j-1},\, 2^{j+1}\big].
\end{equation}
and
$$
\sum_{j \in \mathbb{Z}} \,\widehat{\phi}_j \equiv 1 \quad\mbox{on}\quad \mathbb{R}\setminus \{0\}.
$$

\begin{Def}\label{TribLiz}
	Let $0<p<\infty$, $0<q\le \infty$, and $s\in \mathbb{R}$. 
	We say
	that an element~$f$ of the quotient space $\mathcal{S}'/\,\mathcal{P}$ belongs to the homogeneous Triebel--Lizorkin space~$\dot{F}_{pq}^{s}$ if
	$$
		\|f\|_{\dot{F}_{pq}^{s}} \df \Big\|\,\Big|\big\{2^{js} f\ast\phi_j\big\}_{j \in \mathbb{Z}}\Big|_{l^q}\Big\|_{L^p} < \infty.
	$$
\end{Def}
If we permute the $L^p$- and $l^q$-norms, we obtain a definition of the Besov spaces~$\dot{B}_{pq}^{s}$.
\begin{Def}\label{Besov}
	Let $0<p\le\infty$, $0<q\le \infty$, and $s\in \mathbb{R}$. 
	We say that $f \in \mathcal{S}'/\,\mathcal{P}$ belongs to the homogeneous Besov space~$\dot{B}_{pq}^{s}$ if
	$$
		\|f\|_{\dot{B}_{pq}^{s}} \df \Big|\Big\{\big\|2^{js} f\ast\phi_j\big\|_{L^p}\Big\}_{j \in \mathbb{Z}}\,\Big|_{l^q} < \infty.
	$$
\end{Def}
Note that we have not define the spaces~$\dot{F}_{\infty q}^{s}$. It turns out that a direct extension of 
Definition~\ref{TribLiz} to $p=\infty$ is not reasonable. Such a space would depend on the choice of a dyadic resolution of unity participating in the definition (see~\cite{Trib2}). A correct definition of~$\dot{F}_{\infty q}^{s}$ follows from duality arguments and can be found, e.g., in~\cite{FrJa, Trib2, Trib}.

There are some well-known facts about Triebel--Lizorkin and Besov spaces.
\begin{Prop}\label{FIs} We have
\begin{enumerate}
	\item[(i)] $\dot{F}_{pp}^{s} = \dot{B}_{pp}^{s} \;\;\mbox{if}\;\; 0<p<\infty$\textup;\vskip5pt
	\item[(ii)] $\dot{B}_{\infty\infty}^{s} \cong \Hold^s \;\;\mbox{if}\;\; s > 0$\textup;\vskip5pt
	\item[(iii)] $\dot{F}_{p2}^{k} \cong \dot{W}^p_k \;\;\mbox{if}\;\; 1<p<\infty \;\;\mbox{and}\;\; k\in\mathbb{Z}_+$\textup;\vskip5pt
	\item[(iv)] $\dot{F}_{\infty 2}^{0} \cong \BMO$.
\end{enumerate}
\end{Prop}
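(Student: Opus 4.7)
The four identifications are all standard facts about Triebel--Lizorkin and Besov spaces; I would treat them in turn, with (iv) being substantially deeper than the others and the main obstacle.

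Part (i) is immediate from Fubini: when $p=q$, the mixed norms $L^p(l^p)$ and $l^p(L^p)$ coincide, so Definitions~\ref{TribLiz} and~\ref{Besov} produce the same quantity. Part (ii) exploits the fact that each $\phi_j$ has vanishing moments of all orders, which follows because $\widehat\phi_j$ vanishes near the origin by~\eqref{phijsupp}. For non-integer $s$ I would write
$$
(f*\phi_j)(x) = \int [f(x-y) - f(x)]\phi_j(y)\,dy,
$$
apply the H\"older bound $|f(x-y)-f(x)|\le \|f\|_{\Hold^s}|y|^s$, and use the scaling $\phi_j(y) = 2^j \phi_0(2^j y)$ together with rapid decay of $\phi_0$ to obtain $\|f*\phi_j\|_\infty \lesssim 2^{-js}\|f\|_{\Hold^s}$; the integer case proceeds similarly with higher-order Taylor remainders and Zygmund differences. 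Conversely, I would reconstruct $f$ modulo polynomials as $\sum_j f*\phi_j$ and, to estimate $|f(x)-f(y)|$, split the sum at the scale $2^j|x-y|\sim 1$, using Bernstein's inequality $\|(f*\phi_j)'\|_\infty \lesssim 2^j\|f*\phi_j\|_\infty$ on the low-frequency side and summing the resulting geometric series.

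For part (iii), since $\widehat\phi_j$ is supported in $|\xi|\sim 2^j$, the Fourier multiplier $m_j(\xi)=2^{jk}(2\pi i\xi)^{-k}\widehat\phi_j(\xi)$ is smooth and, after rescaling by $2^{-j}$, uniformly bounded together with all its derivatives. Hence $2^{jk}(f*\phi_j) = f^{(k)}*\psi_j$ for another Littlewood--Paley family $\{\psi_j\}$, and the classical vector-valued Littlewood--Paley inequality for $1<p<\infty$ yields
$$
\Big\|\Big(\sum_j 2^{2jk}|f*\phi_j|^2\Big)^{1/2}\Big\|_{L^p} \sim \|f^{(k)}\|_{L^p}.
$$

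Part (iv), due essentially to Fefferman and Stein, is the main obstacle. The natural route is duality: the correct sequence-space definition of $\dot{F}_{\infty 2}^0$ given in~\cite{FrJa, Trib2, Trib} is dual to $\dot{F}_{12}^0\cong H^1$, and Fefferman's theorem identifies $(H^1)^*$ with $\BMO$. Equivalently, one verifies directly that $f\in\BMO$ precisely when $|(f*\phi_j)(x)|^2$, weighted by $2^{-j}$, forms a Carleson measure on the upper half-plane whose Carleson norm is comparable to $\|f\|_{\dot{F}_{\infty 2}^0}$. I would invoke these classical results rather than reprove them.
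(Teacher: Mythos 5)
The paper does not supply a proof of Proposition~1 at all: part (i) is left as a tautology, part (ii) is attributed to the characterization of the Besov norm in Triebel's book \cite[1.4.5]{Trib}, and parts (iii) and (iv) are referred wholesale to \cite[Chapter~5]{Trib4}. Your sketches are correct and amount to unwinding what those references actually prove, so from the paper's point of view you have overshot rather than deviated. A few remarks. For (i) nothing more than Fubini (or rather the identity $L^p(l^p)=l^p(L^p)$ for counting measure) is needed, as you say. For (ii) your estimate $\int |y|^s|\phi_j(y)|\,dy \lesssim 2^{-js}$ using $\phi_j(y)=2^j\phi_0(2^jy)$ is the right mechanism; for integer $s$ one must replace $f(x-y)-f(x)$ by a second difference (this is the Zygmund class, not Lipschitz), and your hedge about Zygmund differences covers that. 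For (iii) the observation that $2^{jk}(2\pi i\xi)^{-k}\widehat{\phi}_j(\xi)$ is, after rescaling, a fixed compactly supported smooth bump is exactly the standard reduction of $\dot W^p_k$ to $\dot F^0_{p2}=L^p$, with the latter handled by the vector-valued Mikhlin--H\"ormander theorem. For (iv) the duality route through $\big(\dot F^0_{12}\big)^*\cong \dot F^0_{\infty 2}$ and Fefferman's $(H^1)^*=\BMO$ is the cleanest; the Carleson-measure reformulation is equivalent and is in fact the way $\dot F^s_{\infty q}$ is defined in \cite{FrJa}, so there is no circularity. In short: mathematically sound, strictly more detailed than the paper, and consistent with the cited sources.
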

Here by $\Hold^s$, $s >0$, we denote the homogeneous H\"older--Zygmund spaces. The corresponding definition
can be found, e.g., in~\cite[1.4.5]{Trib}. In the same place the Besov norm is presented in the form that immediately implies (ii).
Here we only note that if $s \notin\mathbb{Z}_+$, then the norm in $\Hold^s$  is equivalent to the corresponding H\"older norm:
$$
	\|f\|_{\Hold^s} \cong \sup_{x\ne y}\frac{\big|f^{(k)}(x) - f^{(k)}(y)\big|}{|x-y|^{s-k}},\quad k = [s].
$$
Concerning (iii) and (iv), see~\cite[Chapter~5]{Trib4}. Here $\dot{W}^p_k$ are homogeneous Sobolev spaces, and (iii) includes, 
in particular, the fact that $\dot{F}_{p2}^{0} = L^p$, $1<p<\infty$.

\subsection{Sharp maximal functions.} Let $\mathcal{P}_i$ be the space of algebraic polynomials of degree strictly less than~$i$. We agree that ${\mathcal{P}_0 = \{0\}}$.
\begin{Def}\label{DefOfM}
Suppose\footnote{A wider range of parameters $p$ and $s$ can be considered in this context, but those that are indicated here suffice for our goals.} $1\le p< \infty$, $i\in\mathbb{Z}_+$, and $s\in[0,i]$. 
Let $h$ be a measurable function on $\mathbb{R}$.
We define the maximal function $\M{i}{s}{p} h$ by the formula
$$
	\M{i}{s}{p} h(x)\df
	\sup_{I \ni x}\inf_{P}\frac{1}{|I|^{s}}\bigg(\frac{1}{|I|}\int\limits_{I}|h-P|^p\bigg)^{1/p},
$$
where the supremum is taken over all the intervals containing~$x$ and the infimum is taken over all the polynomials $P \in \mathcal{P}_i$.
\end{Def}
\begin{Def}\label{DefOfCps}
	Let ${1\le p<\infty}$ and $s>0$. Suppose $f \in L_{\mathrm{loc}}^1/\,\mathcal{P}_{[s]+1}$.
	We say that $f \in \Hold_p^s$ if
	$$
		\|f\|_{\Hold_p^s} \df \big\|\M{[s]+1}{s}{p} f\big\|_{L^p} < \infty.
	$$
\end{Def}
We can extend this definition to $\Hold_\infty^s$. It is known (see~\cite{Ca, Me} and the exposition in~\cite{KiKr}) that 
the quantities $\|\M{i}{s}{p} f\|\cii{L^\infty}$ are equivalent for various $p$, and so we put
\begin{equation}\label{Csinfty}
	\|f\|_{\Hold_\infty^s} \df \big\|\M{[s]+1}{s}{2} f\big\|_{L^\infty}.
\end{equation}
We have (see~\cite{Ca, KiKr, Me})
$$
	\Hold_\infty^s \cong \Hold^s.
$$  

Following Triebel~\cite[1.7.2]{Trib}, we put 
\begin{equation}\label{FInfty}
	\dot{F}_{\infty \infty}^{s} \df \dot{B}_{\infty \infty}^{s} \cong \Hold^s,\quad s>0, 
\end{equation}	
and state the following fact.
\begin{Prop}\label{FIsC}
	If $1\le p\le\infty$ and $s>0$, then for $f\in \dot{F}_{p\infty}^{s}$ we have 
	\begin{equation}\label{FViaM}
		\dot{F}_{p\infty}^{s} \cong \Hold_p^s.
	\end{equation}
\end{Prop}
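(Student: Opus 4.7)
The case $p=\infty$ is already settled by combining~\eqref{FInfty}, Proposition~\ref{FIs}(ii), and the equivalence $\Hold^s\cong\Hold_\infty^s$ recorded around~\eqref{Csinfty}. For $1\le p<\infty$ the plan is to set $G(x)\df\sup_j 2^{js}|f\ast\phi_j(x)|$, so that $\|G\|_{L^p}=\|f\|_{\dot F^s_{p\infty}}$, and then to prove a two-sided pointwise-type comparison between $G$ and $M^p_{[s]+1,s}f$ (up to a Hardy--Littlewood maximal function on one side). The equivalence of the two $L^p$-norms then follows from the Hardy--Littlewood maximal theorem applied with suitable exponents.

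For $\|f\|_{\dot F^s_{p\infty}}\lesssim \|f\|_{\Hold_p^s}$ the crucial observation is that $\widehat\phi_j$ vanishes near the origin (by~\eqref{phijsupp}), so $\phi_j$ has vanishing moments of every order and $f\ast\phi_j(x)=(f-Q_0)\ast\phi_j(x)$, where $Q_0\in\mathcal{P}_{[s]+1}$ is the polynomial best approximating $f$ on the interval $I_0$ of radius $2^{-j}$ around~$x$. I then split this convolution into dyadic shells centred at $x$, and on the $k$-th shell write $|f-Q_0|\le |f-Q_k|+|Q_k-Q_0|$, where $Q_k$ is optimal for the concentric interval $I_k$ of radius $2^{k-j}$: the first summand is handled directly by Definition~\ref{DefOfM} (using Hölder to pass from $L^p$- to $L^1$-oscillation), while the difference between two best polynomial approximants on nested intervals is controlled by a routine Markov-type estimate on $\mathcal{P}_{[s]+1}$. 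Combining this with the Schwartz decay $|\phi_j(x-y)|\lesssim 2^j(1+2^j|x-y|)^{-N}$ with $N$ sufficiently large, the resulting geometric sum in $k$ converges and yields $|f\ast\phi_j(x)|\lesssim 2^{-js}M^p_{[s]+1,s}f(x)$, whence $G(x)\lesssim M^p_{[s]+1,s}f(x)$ pointwise.

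For the reverse inequality, fix $x\in\R$ and an interval $I\ni x$ of length $\sim 2^{-j_0}$, and split $f=\sum_{j\le j_0}f\ast\phi_j+\sum_{j>j_0}f\ast\phi_j$. The high-frequency tail is handled by the triangle inequality together with $|f\ast\phi_j(y)|\le 2^{-js}G(y)$, which after geometric summation gives $\bigl|\sum_{j>j_0}f\ast\phi_j(y)\bigr|\lesssim |I|^s G(y)$. For the low-frequency smooth part, I take $P$ to be the Taylor polynomial of degree $[s]$ at $x$ of $\sum_{j\le j_0}f\ast\phi_j$; Taylor's remainder combined with Bernstein's inequality $\|(f\ast\phi_j)^{([s]+1)}\|_{L^\infty(I)}\lesssim 2^{j([s]+1)}\|f\ast\phi_j\|_{L^\infty(cI)}$ controls $|f_{\le j_0}(y)-P(y)|$ on $I$ by $|y-x|^{[s]+1}\sum_{j\le j_0}2^{j([s]+1-s)}$ times a suitable majorant of $G$. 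Since $[s]+1-s>0$ the series sums at its top end to $2^{j_0([s]+1-s)}\sim |I|^{-([s]+1-s)}$, producing the same type of $|I|^s$-bound. Dividing by $|I|^s$, taking the $L^p$-average over $I$, and then the supremum over $I\ni x$ yields $M^p_{[s]+1,s}f(x)\lesssim \bigl(\mathcal{M}G^r(x)\bigr)^{1/r}$ for some $r<p$, and the Hardy--Littlewood maximal inequality on $L^{p/r}$ closes the loop.

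The main obstacle is the Taylor-remainder bound of the previous paragraph: a naive use of $\|f\ast\phi_j\|_{L^\infty(cI)}$ may be infinite, and pointwise values of $f\ast\phi_j$ at a single point do not control its derivatives uniformly on~$I$. The remedy is Peetre's maximal function $(f\ast\phi_j)^*_a(x)\df\sup_y|f\ast\phi_j(y)|(1+2^j|x-y|)^{-a}$, which through a reproducing-kernel identity together with Bernstein's inequality simultaneously dominates $|(f\ast\phi_j)^{(k)}(y)|$ uniformly for $y\in I$ and is bounded pointwise by $\bigl(\mathcal{M}|f\ast\phi_j|^r\bigr)^{1/r}$ whenever $a>1/r$. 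Choosing $a$ large and $r<p$ small enough arranges simultaneously the convergence of all geometric sums in $j$ and the applicability of the Hardy--Littlewood maximal inequality on $L^{p/r}$, which is valid for every $1\le p<\infty$.
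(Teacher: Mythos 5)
The paper does not actually supply a proof of this proposition: it is stated as an immediate consequence of \cite[Theorem~1]{Se}, so any self-contained argument is, by definition, a different route. Your outline for the easy direction $\|f\|_{\dot F^s_{p\infty}}\lesssim\|f\|_{\Hold_p^s}$ (vanishing moments of $\phi_j$, dyadic shells, the triangle inequality $|f-Q_0|\le|f-Q_k|+|Q_k-Q_0|$ together with Markov and a telescoping sum) is standard and sound.

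However, there is a genuine gap in the converse direction, in the treatment of the high-frequency tail. You bound $|f_{>j_0}(y)|\lesssim|I|^s G(y)$ pointwise on $I$ and then ``take the $L^p$-average over $I$ and the supremum over $I\ni x$.'' That operation produces $\sup_{I\ni x}\bigl(\tfrac{1}{|I|}\int_I G^p\bigr)^{1/p}=\bigl(\mathcal{M}(G^p)(x)\bigr)^{1/p}$, not $\bigl(\mathcal{M}(G^r)(x)\bigr)^{1/r}$ with $r<p$ as you claim. The two are not comparable pointwise (the exponent goes the wrong way), and the $L^p$-norm of $\mathcal{M}(G^p)^{1/p}$ equals $\|\mathcal{M}(G^p)\|_{L^1}^{1/p}$, which is infinite for any nontrivial $G$ since $\mathcal{M}$ maps $L^1$ to $L^1$ only trivially. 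You do invoke Peetre's maximal function as ``the remedy,'' but you explicitly aim it at the Taylor-remainder term, not at the tail $f_{>j_0}$; and if one tries to apply it to the tail directly, the factor $(1+2^j|x-y|)^a\sim 2^{(j-j_0)a}$ for $y\in I$, $j>j_0$ forces $a<s$ to sum the geometric series, which collides with the requirement $a>1/r>1/p$ needed for $(f\ast\phi_j)^*_a\lesssim\mathcal M(|f\ast\phi_j|^r)^{1/r}$ as soon as $s\le 1/p$. To close the argument along your lines one must first establish the John--Nirenberg/good-$\lambda$ type equivalence $\|\M{[s]+1}{s}{p}f\|_{L^p}\sim\|\M{[s]+1}{s}{r}f\|_{L^p}$ for some $r<p$ (Campanato--Str\"omberg theory), after which the $L^r$-oscillation of $f_{>j_0}$ genuinely gives $\mathcal{M}(G^r)^{1/r}$; this key reduction is missing from your write-up, and without it the claimed pointwise inequality $\M{[s]+1}{s}{p}f\lesssim\mathcal{M}(G^r)^{1/r}$ is simply false.
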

This proposition is a consequence of~\cite[Theorem~1]{Se}.

\subsection{Interpolation.}
The interpolation between Triebel--Lizorkin spaces is one of the main components of our subsequent considerations. 
\begin{Prop}
Interpolating between $\dot{F}_{pq}^{s}$-spaces, we can obtain another Tri\-eb\-el--Li\-zor\-kin space as well as a Besov space depending on the interpolation method we use.
\begin{enumerate}
	\item[(i)]
		Let $s_0,s_1 \in \mathbb{R}$, $1 \le q_0 < \infty$, $1 \le q_1 \le \infty$, and $1\le p_0,p_1 < \infty$. 
		Suppose
		\begin{gather*}
		0 <\theta< 1,\quad s= (1-\theta)s_0 + \theta s_1,\rule{0pt}{12pt}\\ 
		\frac{1}{p} = \frac{1-\theta}{p_0} + \frac{\theta}{p_1},\quad\mbox{and}\quad \frac{1}{q} = \frac{1-\theta}{q_0} + \frac{\theta}{q_1}. \rule{0pt}{18pt}
		\end{gather*}
		Applying the complex interpolation method, we have
		\begin{equation}\label{CompInterp}
			\big [\dot{F}_{p_0q_0}^{s_0},\, \dot{F}_{p_1q_1}^{s_1}\big ]_{\theta} = \dot{F}_{pq}^{s}.
		\end{equation}
	\item[(ii)]
		Let $s_0,s_1 \in \mathbb{R}$, $s_0 \ne s_1$, $0 < q_0,q,q_1 \le \infty$, and $0<p<\infty$. As above, suppose
		$$
			0 <\theta< 1\quad\mbox{and}\quad s= (1-\theta)s_0 + \theta s_1.
		$$
		Applying the real interpolation method, we have
		\begin{equation}\label{RealInterp}
			\big (\dot{F}_{pq_0}^{s_0},\, \dot{F}_{pq_1}^{s_1}\big )_{\theta, q} = \dot{B}_{pq}^{s}.
		\end{equation}
\end{enumerate}
\end{Prop}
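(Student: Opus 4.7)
The plan is to apply the standard retraction--coretraction formalism, reducing both parts to interpolation identities for weighted vector-valued Lebesgue spaces. Choose auxiliary Schwartz functions $\tilde\phi_j$ with $\widehat{\tilde\phi}_j\equiv 1$ on $\supp\widehat\phi_j$ and with $\supp\widehat{\tilde\phi}_j$ contained in a slightly enlarged dyadic annulus, so that $\tilde\phi_j*\phi_j=\phi_j$. Writing $l^q_s$ for $l^q(\mathbb{Z})$ weighted by $\{2^{js}\}_{j\in\mathbb{Z}}$, the map $Jf=\{f*\phi_j\}_{j\in\mathbb{Z}}$ embeds $\dot F^{s}_{pq}$ isometrically into $L^p(\mathbb{R};l^q_s)$ and $\dot B^{s}_{pq}$ isometrically into $l^q_s(L^p(\mathbb{R}))$, while $R\{g_j\}=\sum_j\tilde\phi_j*g_j$ is a bounded left inverse: on the Triebel--Lizorkin side this rests on the Fefferman--Stein vector-valued maximal theorem, and on the Besov side on Young's inequality. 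Thus $\dot F^{s}_{pq}$ and $\dot B^{s}_{pq}$ become retracts of the corresponding vector-valued spaces, and every interpolation functor commutes with the retraction.

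For part~(i), the retract theorem for the complex functor reduces the question to identifying $[L^{p_0}(l^{q_0}_{s_0}),L^{p_1}(l^{q_1}_{s_1})]_\theta$. By Calder\'on's theorem on complex interpolation of Bochner spaces (for which the hypothesis $q_0<\infty$ is the standard measurability/reflexivity condition), this equals $L^p$ of the sequence-space interpolant, and the elementary weighted $l^q$ computation gives $[l^{q_0}_{s_0},l^{q_1}_{s_1}]_\theta=l^q_s$ with the indices of the statement. Pulling back through $R$ yields \eqref{CompInterp}.

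For part~(ii), the retract theorem for the real functor reduces the claim to the vector-valued identity $(L^p(l^{q_0}_{s_0}),L^p(l^{q_1}_{s_1}))_{\theta,q}=l^q_s(L^p)$, which holds precisely under the hypothesis $s_0\ne s_1$. I would prove this by computing the $K$-functional. Since $p_0=p_1=p$, Minkowski's inequality gives the pointwise relation $K(t,g)\approx\big\|K_{l^{q_0}_{s_0},l^{q_1}_{s_1}}(t,g(\cdot))\big\|_{L^p}$. The inner sequence $K$-functional admits a coordinate-by-coordinate splitting because the ratio $2^{j(s_0-s_1)}$ is strictly monotone in $j$; consequently the sum over $j$ passes through the outer $L^p$-norm, and integrating against $t^{-\theta-1/q}\,dt/t$ produces exactly the Besov norm $\big(\sum_j 2^{jsq}\|g_j\|_{L^p}^q\big)^{1/q}$. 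Pulling back through $R$ then identifies the interpolant with $\dot B^{s}_{pq}$, giving \eqref{RealInterp}.

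The main obstacle is the real-interpolation identity for the vector-valued pair, and in particular the mechanism by which the ``swap'' from $L^p(l^q_s)$ (Triebel--Lizorkin structure) to $l^q_s(L^p)$ (Besov structure) is forced whenever $s_0\ne s_1$. The hypothesis $s_0\ne s_1$ is essential: it is precisely the strict monotonicity of $2^{j(s_0-s_1)}$ in $j$ that makes the coordinate-wise $K$-splitting sharp and allows the sum over $j$ to commute with the outer $L^p$. For $p<1$ the Fefferman--Stein step is unavailable and one must work with molecular or $\varphi$-transform characterizations of $\dot F^{s}_{pq}$, but this is a technical rather than conceptual complication.
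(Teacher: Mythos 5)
The paper itself does not prove this proposition; it dispatches part (i) to Frazier--Jawerth \cite[Corollary~8.3]{FrJa} and part (ii) to Triebel \cite[2.4.2, 5.2.5]{Trib4}. You instead reconstruct the argument that underlies both references, and your sketch is essentially the right one: the retract/coretract method, reducing complex interpolation to Calder\'on's identity for Bochner spaces and real interpolation to a $K$-functional computation. Your identification of the swap $(L^p(l^{q_0}_{s_0}),L^p(l^{q_1}_{s_1}))_{\theta,q}=l^{q}_{s}(L^p)$ as the crux of part (ii), and of $s_0\ne s_1$ as the hypothesis that drives the coordinate-by-coordinate splitting of the inner $K$-functional, is precisely the conceptual content of the cited Triebel theorem. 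So the proposal is correct and gives a self-contained account where the paper gives only citations.

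Two small calibrations. First, the Fefferman--Stein step already fails at $p=1$, not only at $p<1$; since the statement allows $p_0=p_1=1$, a complete proof of part (i) at that edge would require the $\varphi$-transform (or an atomic/molecular) coretraction rather than the maximal-function one, a technicality you mention only for $p<1$. In the applications the paper makes, $p\ge 2$ throughout, so this is harmless. Second, the equivalence $K(t,g;L^p(A_0),L^p(A_1))\approx \big\|K(t,g(\cdot);A_0,A_1)\big\|_{L^p}$ rests on a measurable-selection argument for near-optimal pointwise decompositions rather than on Minkowski's inequality per se (Minkowski only gives one direction trivially); this is the classical fact in Bergh--L\"ofstr\"om and should be cited as such. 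Neither point affects the validity of the argument.
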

Part~(i) of this theorem is contained in~\cite[Corollary 8.3]{FrJa}. Here 
$[\cdot,\cdot]_\theta$ is the classical complex interpolation method with the interpolation property.
Concerning part~(ii), see~\cite[2.4.2, 5.2.5]{Trib4}

\subsection{Vector-valued spaces.}
Let $X$ be a Triebel--Lizorkin ($p\ne\infty$) or Besov space. Then by $X_*$ we denote the space of sequences
\begin{equation*}
	f = \{f_m\}\cii{m\in\mathbb{N}},\quad f_m\in \mathcal{S}'/\,\mathcal{P},
\end{equation*}
equipped with the corresponding norm where we substitute lengths in~$l^2$ instead of absolute values. 
For example, if $X = \dot{F}_{pq}^{s}$, then $X_*$ has the norm
$$
	\|f\|\cii{X_*}
	=\Big\|\,\Big|\Big\{\big|\big\{2^{js} f_m\ast\phi_j\big\}_{m\in\mathbb{N}}\big|_{l^2}\Big\}_j\Big|_{l^q}\Big\|_{L^p}.
$$
We leave the reader to determine what will be the norm in $X_*$ if we put $X = \dot{B}_{pq}^{s}$.
By $X_N$, $N\in\mathbb{N}$, we denote the subspace in~$X_*$ consisting of sequences 
such that $f_m = 0$ for $m > N$. 
Similarly substituting $l^2$-norms instead of absolute values, we can also introduce the maximal functions~$\M{i}{s}{p}$ as well as the spaces~$\Hold_p^s$ (see Definitions~\ref{DefOfM} and~\ref{DefOfCps}) for finite or countable collections of functions.

Since there is no difference whether we deal with absolute values or with lengths of finite-dimensional vectors,
we can assert the following. 
\begin{Rem}\label{N1}
	All aforecited facts on Triebel--Lizorkin or Besov spaces~$X$ remain true for the corresponding spaces~$X_N$ 
	independently on~$N$.
\end{Rem}

Next, since the $l^2$-norm is a limit of an increasing non-negative sequence, we have
$$
	\big\|\{f_m\}\cii{m\in\mathbb{N}}\big\|_{X_*} = \lim_{N\to \infty}\big\|\{f_1,\dots,f_N, 0,\dots\}\big\|_{X_N}
$$
and, therefore, it suffice to deal only with the spaces~$X_N$. Namely, we can state the following fact.
\begin{Rem}\label{N2}
	If for finite collections~$\mathcal{I} = \{I_m\}_{m=1}^N$ 
	of intervals the operators $S\cii{\mathcal{I}}^{\varphi}$ and 
	$\widetilde{S}\cii{\mathcal{I}}^{\varphi}$ are bounded from 
	$X$ to $X_N$  uniformly in~$N$ and~$\mathcal{I}$, then 
	this remains true for countable collections~$\mathcal{I}$: 
	the families~$\widetilde{\mathfrak{S}}_{\varphi}$ and	${\mathfrak{S}}_{\varphi}$
	are uniformly bounded from $X$ to~$X_*$.
\end{Rem}

Using considerations from~\cite{KiKr,Os}, we can prove the following 
proposition. 
\begin{Prop}
Suppose $2 \le p < \infty$, $i \in \mathbb{Z}_+$, and $s \in [0,i)$. 
If $f$ is a measurable function such that $\M{i}{s}{p}f$ is finite at least at one point, then $f \in \mathcal{S}'$ and
we have the following pointwise estimate:
\begin{equation}\label{PointEst}
	\M{i}{s}{p}\big(\widetilde{S}\cii{\mathcal{I}}^{\varphi}f\big) \le C \M{i}{s}{p}f,
\end{equation}
where the constant~$C$ does not depend on~$\mathcal{I}$ or~$f$.
\end{Prop}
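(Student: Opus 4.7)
The plan is to prove the pointwise sharp--maximal bound $\M{i}{s}{p}(\widetilde{S}\cii{\mathcal{I}}^{\varphi}f)(x)\lesssim \M{i}{s}{p}f(x)$ by a near/far splitting on each test interval. Temperedness of $f$ follows separately from the finiteness of $\M{i}{s}{p}f$ at a single point: this finiteness provides polynomial approximants in $\mathcal{P}_i$ on every interval with $L^p$-error $\lesssim |I|^{s+1/p}$, and comparing these approximants on nested dyadic intervals yields polynomial growth of $f$, hence $f\in\mathcal{S}'$.

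For the main inequality, fix $x$ and an interval $J\ni x$, choose $P_0\in\mathcal{P}_i$ almost realizing $\M{i}{s}{p}f(x)$ on $2J$, and split $f=P_0+g+h$ with $g=(f-P_0)\mathds{1}_{2J}$ and $h=(f-P_0)\mathds{1}_{(2J)^c}$. The target is to exhibit an $l^2$-valued polynomial $Q_J$ of degree $<i$ with
$$
\Big(\frac{1}{|J|}\int_J\big|\widetilde{S}\cii{\mathcal{I}}^{\varphi}f-Q_J\big|_{l^2}^p\Big)^{1/p}\lesssim |J|^s\M{i}{s}{p}f(x).
$$
First, $\widetilde{S}\cii{\mathcal{I}}^{\varphi}P_0=0$: since $\widehat{\varphi}_m$ vanishes in a neighborhood of $0$ by~\eqref{ZeroNotContained}, all moments of $\varphi_m$ are zero and $P_0\ast\varphi_m\equiv 0$. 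Second, for the local part the Rubio de Francia bound (uniformly in $\mathcal{I}$, for $2\le p<\infty$) yields $\|\widetilde{S}\cii{\mathcal{I}}^{\varphi}g\|_{L^p(l^2)}\lesssim\|g\|_{L^p}\lesssim|J|^{s+1/p}\M{i}{s}{p}f(x)$, matching the desired bound on $J$. Third, for the far part I would take $Q_J$ to be the component-wise Taylor polynomial of $\widetilde{S}\cii{\mathcal{I}}^{\varphi}h$ of degree $i-1$ at the center $x_J$ of $J$; the integral form of the Taylor remainder reduces everything to a pointwise $l^2$-bound on the $i$-th derivative:
$$
\big|\widetilde{S}\cii{\mathcal{I}}^{\varphi}h(x)-Q_J(x)\big|_{l^2}\lesssim |J|^i\sup_{\xi\in J}\big|\partial_x^i\widetilde{S}\cii{\mathcal{I}}^{\varphi}h(\xi)\big|_{l^2}.
$$

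The technical core is the estimate $|\partial_x^i\widetilde{S}\cii{\mathcal{I}}^{\varphi}h(\xi)|_{l^2}\lesssim|J|^{s-i}\M{i}{s}{p}f(x)$ for $\xi\in J$. A direct calculation shows $\partial_x^i(\widetilde{S}\cii{\mathcal{I}}^{\varphi}f)_m(x)=e^{-2\pi i a_m x}(f\ast\psi_m^{(i)})(x)$, where the multiplier $\widehat{\psi}_m^{(i)}(\xi)=(2\pi i(\xi-a_m))^i\widehat{\varphi}_m(\xi)$ retains the same disjoint spectral support in $I_m$. I would then dyadically split $h=\sum_k h_k$ on annuli $A_k=2^{k+1}J\setminus 2^kJ$, replace $P_0$ inside each annulus by a scale-matched approximant $P_k$ of $f$ on $2^{k+1}J$ (legitimate because $\widehat{\psi}_m^{(i)}$ also vanishes near $0$, so polynomials are annihilated), and bound each annular contribution by combining the Schwartz decay of $\psi_m^{(i)}$ at distances $\sim 2^k|J|$ with the $L^2\to L^2(l^2)$ Plancherel bound coming from the disjoint spectral supports. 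Summing over $k$ converges because $s<i$.

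The main obstacle is this last $l^2$-estimate. Differentiating once produces a multiplicative factor $\lambda_m=b_m-a_m$ that is unbounded over $\mathcal{I}$, so a naive component-by-component bound breaks down. The balance is restored by the Schwartz decay $(1+\lambda_m\cdot 2^k|J|)^{-N}$ of $\psi_m^{(i)}$, which dominates $\lambda_m^i$ once $\lambda_m\gtrsim(2^k|J|)^{-1}$, while for the low frequencies $\lambda_m\lesssim(2^k|J|)^{-1}$ the spectral disjointness yields a uniform Plancherel bound whose geometric factor $(2^k|J|)^{-i}$ sits below the target. This is precisely where the rotation factor $e^{-2\pi i a_m x}$ is essential, and the bookkeeping is exactly the pointwise maximal-function analysis developed in~\cite{Os,KiKr}.
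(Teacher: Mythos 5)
Your sketch follows exactly the approach the paper delegates to \cite{Os,KiKr}: the Calder\'on--Zygmund-type decomposition $f=P_0+g+h$ on a fixed interval $J$, annihilation of $P_0$ via the moment cancellation coming from $\widehat\varphi_m$ vanishing near $0$, Rubio de Francia's $L^p$-bound for the local part (which is precisely the modification the paper singles out as what lets one pass from the $p=2$ case of \cite{Os} to general $p\ge2$), and a Taylor-remainder plus kernel-decay estimate for the far part, where the rotation by $e^{-2\pi i a_m x}$ is the ingredient that keeps the differentiated multiplier $\widehat\psi_m^{(i)}(\xi)=(2\pi i(\xi-a_m))^i\widehat\varphi_m(\xi)$ at scale $\lambda_m^i$ rather than $|a_m|^i$. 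This is essentially the same route the paper points to; since the paper itself only gestures at the proof, there is nothing structurally different to compare. One remark on precision: the step of ``replacing $P_0$ inside each annulus by a scale-matched approximant $P_k$'' is really a telescoping comparison $P_0\to P_1\to\cdots\to P_k$ with a controlled growth of $\|P_k-P_{k-1}\|_{L^\infty(2^{k+1}J)}$, not a literal local swap --- this introduces a harmless logarithmic factor absorbed by $s<i$; and the ``uniform Plancherel bound'' for the low-frequency part must be localized (cut the kernel off near the annulus) before Plancherel is usable for a pointwise bound at $\xi\in J$. Both points are routine in the \cite{KiKr} machinery and do not affect the soundness of the plan.
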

In~\cite{Os}, a similar estimate is proved for $p = 2$ and for non-smooth multipliers $\widehat\varphi_m$. 
But Rubio de Francia's~\cite{Ru} theorem allows to prove that 
the same method can be employed for all $p \ge 2$; and the smoothness of $\widehat\varphi_m$ simplifies the arguments. 
Also we note that this is the very place where we need the set $\supp \widehat\varphi$ to be separated from~$0$ and~$1$.

Relations~\eqref{FViaM} and~\eqref{PointEst} together with Facts~\ref{N1} and~\ref{N2} imply the following consequence.
\begin{Prop}\label{LowerBoundary}
	Let $2\le p\le\infty$ and $s>0$. If we put $X = \dot{F}_{p\infty}^{s}$, 
	then the family~$\widetilde{\mathfrak{S}}_{\varphi}$ will be uniformly bounded from~$X$ to~$X_*$. 
\end{Prop}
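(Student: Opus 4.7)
The plan is to chain together the pointwise estimate~\eqref{PointEst}, the sharp-maximal-function description~\eqref{FViaM}, and the reduction to finite collections provided by Fact~\ref{N2}.

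First, by Fact~\ref{N2} it suffices to prove the bound $\widetilde{S}\cii{\mathcal{I}}^{\varphi}\colon X \to X_N$ uniformly in $N$ and in the choice of a finite collection $\mathcal{I} = \{I_m\}_{m=1}^{N}$ of pairwise disjoint intervals satisfying~\eqref{ZeroNotContained}. By Proposition~\ref{FIsC} and its vector-valued extension recorded after Fact~\ref{N1}, this reduces, for $2 \le p < \infty$, to establishing the uniform inequality
$$
\big\|\M{[s]+1}{s}{p}\big(\widetilde{S}\cii{\mathcal{I}}^{\varphi} f\big)\big\|\cii{L^p} \le C\,\big\|\M{[s]+1}{s}{p} f\big\|\cii{L^p},
$$
and, for $p = \infty$, to the analogous $L^\infty$-bound with the inner maximal function $\M{[s]+1}{s}{2}$ dictated by~\eqref{Csinfty}.

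Both inequalities then fall out of~\eqref{PointEst}. Setting $i = [s]+1$ we have $s \in [0,i)$, so for $2 \le p < \infty$ the pointwise bound $\M{i}{s}{p}(\widetilde{S}\cii{\mathcal{I}}^{\varphi} f) \le C\,\M{i}{s}{p} f$ is immediate, and taking $L^p$-norms of both sides yields the target estimate. For $p = \infty$, I would apply~\eqref{PointEst} with the admissible exponent $2$ in place of $p$ and the same $i = [s]+1$, obtaining the corresponding pointwise bound for $\M{i}{s}{2}$, and then pass to the $L^\infty$-norm to match the definition of the $\Hold_\infty^s$-norm via~\eqref{Csinfty}.

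I do not expect a substantial obstacle: the proposition is essentially a direct assembly of~\eqref{PointEst} with~\eqref{FViaM}. The only delicate point is the case $p = \infty$, where~\eqref{Csinfty} compels the use of the exponent $2$ inside the sharp maximal function even though the outer norm is $L^\infty$; this is harmless since $2$ lies in the admissible range of~\eqref{PointEst}. A brief verification that~\eqref{PointEst} is being applied correctly to the vector-valued output $\widetilde{S}\cii{\mathcal{I}}^{\varphi} f$ (with $l^2$-lengths replacing absolute values inside $\M{i}{s}{p}$) is also warranted, but this is precisely the convention fixed in the paragraph after Fact~\ref{N1}.
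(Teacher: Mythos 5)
Your proposal assembles exactly the ingredients the paper invokes (the reduction via Fact~\ref{N2}, the sharp-maximal characterization of Proposition~\ref{FIsC}/\eqref{FViaM} in its vector-valued form per Fact~\ref{N1}, the pointwise bound~\eqref{PointEst}, and the exponent-$2$ choice dictated by~\eqref{Csinfty} when $p=\infty$), which is precisely the paper's one-line argument spelled out. The reasoning is correct and matches the paper's approach.
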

We also have (see~\cite{Os} again) the following proposition.
\begin{Prop}\label{BMO}
	Let $X = \dot{F}_{\infty 2}^{0} \cong \BMO$. 
	Then the family~$\widetilde{\mathfrak{S}}_{\varphi}$ will be uniformly bounded from~$X$ to~$X_* \df \BMO(l^2)$.
\end{Prop}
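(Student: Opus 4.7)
The plan is to combine the pointwise estimate~\eqref{PointEst}, specialized to $p=2$, $i=1$, $s=0$, with the Campanato--Str\"omberg characterization of $\BMO$. Recall that
\begin{equation*}
	\M{1}{0}{2} f(x) = \sup_{I\ni x}\inf_{c}\Big(\frac{1}{|I|}\int_I |f-c|^2\Big)^{\!1/2},
\end{equation*}
and the John--Nirenberg inequality yields $\big\|\M{1}{0}{2} f\big\|_{L^\infty} \cong \|f\|_{\BMO}$. The same formula with $l^2$-lengths in place of absolute values gives the analogous equivalence $\big\|\M{1}{0}{2}\{f_m\}\big\|_{L^\infty} \cong \big\|\{f_m\}\big\|_{\BMO(l^2)}$ for $l^2$-valued functions.

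Next, as emphasized in the discussion preceding Fact~\ref{N1}, the pointwise estimate~\eqref{PointEst} together with the maximal functions $\M{i}{s}{p}$ extends verbatim to finite collections of functions upon substituting $l^2$-norms for absolute values. Applied with $p=2$, $i=1$, $s=0$ to a finite truncation $\mathcal{I}_N=\{I_1,\dots,I_N\}$ of~$\mathcal{I}$, this gives
\begin{equation*}
	\M{1}{0}{2}\bigl(\widetilde{S}\cii{\mathcal{I}_N}^{\varphi} f\bigr)(x) \le C\, \M{1}{0}{2} f(x)
\end{equation*}
pointwise in~$x$, with $C$ independent of~$f$, $N$, and~$\mathcal{I}_N$. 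Taking $L^\infty$-norms on both sides and invoking the vector-valued Campanato characterization yields uniform boundedness of $\widetilde{S}\cii{\mathcal{I}_N}^{\varphi}$ from~$X=\BMO$ to the truncated vector-valued analogue $X_N \subset \BMO(l^2)$. Fact~\ref{N2} then lets us pass from finite to countable collections~$\mathcal{I}$, completing the argument.

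The delicate ingredient is the vector-valued Campanato characterization of $\BMO(l^2)$ itself. For scalar functions the equivalence of the $L^1$- and $L^2$-oscillation suprema is a standard consequence of John--Nirenberg. The same proof carries over to $l^2$-valued functions: the exponential decay / good-$\lambda$ estimate uses only the triangle inequality, which is available in~$l^2$, so the oscillations of $|\{f_m - c_m\}|_{l^2}$ in~$L^1$ and in~$L^2$ over any fixed interval remain comparable. Once this vector-valued John--Nirenberg is in hand, the rest of the proof is a clean transcription of~\eqref{PointEst} into the norms of $\BMO$ and $\BMO(l^2)$, which is the content we borrow from~\cite{Os}.
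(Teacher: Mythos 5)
Your proof is correct and follows essentially the same route the paper takes: the paper's proof is only the citation ``see \cite{Os} again,'' and the result in \cite{Os} is obtained precisely via the pointwise estimate~\eqref{PointEst} with the oscillatory maximal function $\M{1}{0}{2}$ combined with John--Nirenberg, which is what you spell out (including the needed remark that John--Nirenberg, and hence the equivalence of $L^1$- and $L^2$-oscillations, persists for $l^2$-valued functions). Your reduction via Facts~\ref{N1} and~\ref{N2} from countable to finite collections matches the paper's general strategy as well.
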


\section{Formulation of the results}
\begin{Def}\label{nondeg}
We say that~$\gamma \in L^1$ is non-degenerate if
$
	\int_{-\infty}^{0} e^{2\pi it}\gamma(t)\,dt \neq 0.
$
\end{Def}
The following fact justifies the term ``non-degenerate''.
\begin{Rem} 
	If~$\gamma\in L^1$ is a non-zero function such that $\widehat\gamma$ is non-negative and supported in $[0,1]$, then $\gamma$ is non-degenerate.
\end{Rem}
\begin{proof}
	Let $\Phi_\gamma(x) \df \int_{-\infty}^xe^{2\pi it}\gamma(t)\,dt$. We have $\widehat{\Phi}_\gamma(\xi)=(2\pi i\xi)^{-1}\,\widehat{\gamma}(\xi-1)$ and 
\[
\int\limits_{-\infty}^0e^{2\pi i t}\gamma(t)\,dt = \Phi_\gamma(0)=\int\limits_{\R}\widehat{\Phi}_\gamma(\xi)\,d\xi=\frac{1}{2\pi i}\int\limits_{1}^2\frac{\widehat{\gamma}(\xi-1)}{\xi}\,d\xi.\]
Since $\widehat{\gamma}\ge 0$ and does not vanish on $[0,1]$, we get $\Phi_\gamma(0)\neq 0$ and $\gamma$ is non-degenerate.
\end{proof}
Now we are ready to present our results.
\begin{Th}\label{ThTrib}
	Let $X = \dot{F}^{s}_{pq}$. We determine various ranges for $p$, $q$, and $s$ for each case considered below.
	\begin{enumerate}
	\item[(i)] Let $2 \le p \le \infty$, $2 \le q \le \infty$, and $s > 0$. We modify this domain as follows (see also Figure~\textup{\ref{interp}}):
		\begin{itemize}
			\item if $q=2$, then for $p \ne \infty$ we consider all $s \ge 0$;
			\item if $q=2$ and $p= \infty$, then we consider only $s=0$;
			\item if $p = \infty$, then we exclude $q \in (2,\infty)$ from consideration.
		\end{itemize}
		If $p$, $q$, and $s$ belong to the domain just described, then the family~$\widetilde{\mathfrak{S}}_{\varphi}$ is uniformly bounded 
		from~$X$ to~$X_*$.
	\item[(ii)]
		For $2 \le q \le p < \infty$ and $s \ge 0$ (see Figure~\textup{\ref{InterpSComplex}}),
		the family~${\mathfrak{S}}_{\varphi}$ is uniformly bounded from~$X$ to~$X_*$.
	\item[(iii)]
		If $X = \dot{F}^{0}_{\infty 2}$ or $X = \dot{F}^{s}_{\infty \infty}$, $s>0$, then there exists a collection~$\mathcal{I}$ 
		of pairwise disjoint intervals 
		such that the operator $S\cii{\mathcal{I}}^{\varphi}$ is \emph{not} bounded from~$X$ to~$X_*$ provided $\varphi$ is non-degenerate.
	\end{enumerate}
\end{Th}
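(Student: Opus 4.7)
The plan is to produce, for each of the two claimed spaces, a collection $\mathcal{I}$ and a sequence of test functions witnessing unboundedness of $S\cii{\mathcal{I}}^{\varphi}$. The crucial contrast with Proposition~\ref{LowerBoundary}, Proposition~\ref{BMO}, and Theorem~\ref{ThTrib}~(i) is that the norms of $\dot{F}^{0}_{\infty 2}\cong\BMO$ and $\dot{F}^{s}_{\infty\infty}\cong\Hold^{s}$ are \emph{not} invariant under the per-component modulation $\{g_m\}\mapsto\{e^{2\pi i a_m x}g_m\}$ that turns $\widetilde{S}\cii{\mathcal{I}}^{\varphi}$ into $S\cii{\mathcal{I}}^{\varphi}$, so one should be able to exploit precisely this modulation to cook up the counterexample.

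First I would localize all computations to a single Littlewood--Paley scale. Take all intervals $I_m\subset[2^{J},2^{J+1}]$ for a fixed large $J$, and test on functions $f$ band-limited to the same annulus. Then each $f*\varphi_m$ has Fourier support in that annulus, so only the block at scale $2^{J}$ contributes nontrivially both to $\|S\cii{\mathcal{I}}^{\varphi}f\|_{X_*}$ and to $\|f\|_{X}$. A direct computation using Definitions~\ref{TribLiz} and~\ref{Besov} reduces the hypothetical bound $\|S\cii{\mathcal{I}}^{\varphi}f\|_{X_*}\le C\|f\|_{X}$ on such $f$ to a single-scale Rubio de Francia square-function estimate of the form
\[
	\Bigl\|\Bigl(\sum_m|f*\varphi_m|^2\Bigr)^{1/2}\Bigr\|_{Y}\le C\|f\|_{Y},
\]
where $Y=L^{\infty}$ corresponds to $X=\dot{F}^{s}_{\infty\infty}$ and $Y=\BMO$ corresponds to $X=\dot{F}^{0}_{\infty 2}$.

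Second, I would exhibit the failure of this single-scale estimate. The non-degeneracy hypothesis on $\varphi$ ensures that whenever $f$ is modulated at a frequency inside $I_m$, the factor $f*\varphi_m$ is a nonzero multiple of the corresponding Fourier projection; this lets one transfer classical $L^{\infty}$ (respectively $\BMO$) counterexamples to Rubio de Francia's square function from the rough-multiplier setting to the smooth one. Concretely, I would choose a lacunary frequency skeleton $\{b_m\}\subset[2^{J},2^{J+1}]$, intervals $I_m$ of fixed small length centered at the $b_m$, and trigonometric polynomials with random Rademacher signs; a Khintchine-type estimate would control the $L^{\infty}$ (or $\BMO$) norm of $f$ while forcing the expected norm of the square function to grow strictly faster. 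The main obstacle, and therefore the bulk of the argument, is this quantitative probabilistic comparison; once it is in hand, the rest is bookkeeping with the Littlewood--Paley characterizations and with the identifications in Proposition~\ref{FIs} and~\eqref{FInfty}.
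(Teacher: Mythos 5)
Your high-level reduction—localize everything to one Littlewood--Paley band and show a single-scale square-function estimate fails on $L^\infty$ (for $\Hold^s$) and on $\BMO$—is a reasonable opening move and is morally what the paper builds on. But the proposed counterexample mechanism is where the real work should be, and it does not function as stated.

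The decisive gap is the randomization step. If you write $f=\sum_{m}\epsilon_m h_m$ with $\widehat{h_m}$ supported in $I_m$, then $(\sum_m|f\ast\varphi_m|^2)^{1/2}$ is \emph{invariant} under the choice of signs $\epsilon_m=\pm1$: each component $f\ast\varphi_m$ sees only the coefficient block in $I_m$. So randomization can only change $\|f\|_X$, not the square function. And Salem--Zygmund/Khintchine estimates move you in the \emph{wrong} direction: for generic signs $\|f\|_{L^\infty}$ is of order $\sqrt{\log N}$ \emph{larger} than $\bigl\|(\sum_m|h_m|^2)^{1/2}\bigr\|_{L^\infty}$, not smaller. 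More structurally, if $f\in L^2(\R)$ (in particular any trigonometric polynomial), the single-scale square function has the same $L^2$ norm as $f$, and there is no way to make it dominate $\|f\|_{L^\infty}$ in the manner you need. A viable counterexample must leave $L^2$ entirely.

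That is exactly what the paper does. It uses one fixed, bounded, non-$L^2$ test function
$f_0=\bigl[\exp(2\pi i\,\cdot)\,\mathds{1}_{[0,\infty)}\bigr]\ast\phi_0$, a modulated smoothed Heaviside whose Fourier transform grows like $(\xi-1)^{-1}$ near $\xi=1$, together with intervals $\I=\{[1+2^{m},1+2^{m+1}]\}_{m\in\Z_-}$ accumulating at that singular frequency. An explicit evaluation gives $g_m(x)=(f_0\ast\varphi_m)(x)=e^{2\pi ix}\Phi_\varphi(2^m x)$, so $g_m(0)=\Phi_\varphi(0)$ for every such $m$, and non-degeneracy of $\varphi$ is precisely the statement that this common value is nonzero. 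Thus $\sum_m|g_m(0)|^2=\infty$, i.e.\ the vector-valued square function is already infinite at a single point; this disposes of $\Hold^s=\dot B^{s}_{\infty\infty}$, and since $\dot F^{s}_{\infty\infty}=\dot B^{s}_{\infty\infty}$ by~\eqref{FInfty}, of the Triebel--Lizorkin endpoint as well. For $\dot F^{0}_{\infty2}\cong\BMO$ the same example works after observing that any admissible polynomials $P_m$ subtracted from the bounded $g_m$ must have degree at most~$1$, and the divergence persists on a positive-measure subset of $[0,1/2]$. Note also that Definition~\ref{TribLiz} does not define $\dot F^{0}_{\infty2}$, so the "restrict to one dyadic block" reduction you invoke at $p=\infty$ needs separate justification; the paper instead argues directly with the $\BMO(l^2)$ norm of the quotient classes. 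In short: the observation that these norms are not rotation-invariant is correct and is what you want to exploit, but the key objects are a singular non-$L^2$ test function and intervals accumulating at its singular frequency, not random signs; and non-degeneracy must actually be used to guarantee that $\Phi_\varphi(0)\neq0$, rather than merely asserted as guaranteeing "a nonzero multiple of the Fourier projection."
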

So there are Triebel--Lizorkin spaces where only the family~$\widetilde{\mathfrak{S}}_{\varphi}$ 
is uniformly bounded as well as spaces where both families~${\mathfrak{S}}_{\varphi}$ 
and~$\widetilde{\mathfrak{S}}_{\varphi}$ are uniformly bounded (in spite of the fact that the corresponding norms are not ro\-ta\-tion-in\-va\-ri\-ant).

Similar result holds for the Besov spaces. 
Namely, we have the following theorem.
\begin{Th}\label{ThBesov}
	Let $X = \dot{B}^{s}_{pq}$.
	\begin{enumerate}
		\item[(i)] Let $2 \le p \le \infty$, $0<q \le \infty$, and $s > 0$. 
		If $p = \infty$, then we exclude all $q \ne \infty$ from consideration (see Figure~\textup{\ref{InterpTildeSReal}}). For such $p$, $q$, and $s$, the family~$\widetilde{\mathfrak{S}}_{\varphi}$ is uniformly bounded 
		from~$X$ to~$X_*$.
		\item[(ii)] For $2 \le p < \infty$, ${0<q \le \infty}$, and ${s \ge 0}$, the family~${\mathfrak{S}}_{\varphi}$ is 
		uniformly bounded from~$X$ to~$X_*$.
		\item[(iii)] Let $X = \dot{B}^{s}_{\infty q}$ for ${0<q \le \infty}$ and ${s \ge 0}$. Then there exists a collection~$\mathcal{I}$ 
		of pairwise disjoint intervals such that the operator $S\cii{\mathcal{I}}^{\varphi}$ is \emph{not} bounded from~$X$ to~$X_*$ provided 
		$\varphi$ is non-degenerate.
	\end{enumerate}
\end{Th}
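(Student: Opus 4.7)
The plan is to derive Theorem~\ref{ThBesov} from the Triebel--Lizorkin results of Theorem~\ref{ThTrib} via the real interpolation formula~\eqref{RealInterp}, with a separate construction for the negative part~(iii).

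For part~(i), fix $2 \le p < \infty$, $q \in (0,\infty]$, and $s > 0$. Choose $0 < s_0 < s < s_1$ with $s = (1-\theta)s_0 + \theta s_1$. Proposition~\ref{LowerBoundary} (together with Remarks~\ref{N1} and~\ref{N2}) gives uniform boundedness of $\widetilde{\mathfrak{S}}_{\varphi}$ from $\dot{F}^{s_i}_{p\infty}$ to $(\dot{F}^{s_i}_{p\infty})_*$ for $i = 0,1$. The real interpolation identity~\eqref{RealInterp} with $q_0 = q_1 = \infty$ yields
\[
\bigl(\dot{F}^{s_0}_{p\infty},\, \dot{F}^{s_1}_{p\infty}\bigr)_{\theta, q} = \dot{B}^{s}_{pq},
\]
and, since the $l^2$-valued extension commutes with real interpolation, the analogous identity holds on the vector-valued side. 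The interpolation property of $(\cdot,\cdot)_{\theta, q}$ then delivers the desired uniform bound on $\dot{B}^{s}_{pq}$. For $p = \infty$ this scheme is unavailable because \eqref{RealInterp} requires $p < \infty$, so only $q = \infty$ can survive; but $\dot{B}^{s}_{\infty\infty} \cong \dot{F}^{s}_{\infty\infty}$ by~\eqref{FInfty}, and this case is already contained in Proposition~\ref{LowerBoundary}.

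For part~(ii), I would run the same scheme with Theorem~\ref{ThTrib}(ii) as the endpoint input. Fix $2 \le p < \infty$, $q \in (0,\infty]$, and $s > 0$. Pick $0 \le s_0 < s < s_1$ and take $q_0 = q_1 = 2$: since $2 \le 2 \le p$ and $s_i \ge 0$, Theorem~\ref{ThTrib}(ii) gives uniform boundedness of $\mathfrak{S}_{\varphi}$ on $\dot{F}^{s_i}_{p2}$, and~\eqref{RealInterp} then transports this to $\bigl(\dot{F}^{s_0}_{p2}, \dot{F}^{s_1}_{p2}\bigr)_{\theta, q} = \dot{B}^{s}_{pq}$. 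The endpoint $s = 0$ is not reached directly. I would handle it by a standard lift: the Riesz potential $I^{\sigma}$, being a Fourier multiplier, commutes with each $S\cii{\mathcal{I}}^{\varphi}$, so boundedness on $\dot{B}^{\sigma}_{pq}$ for some $\sigma > 0$ transfers back to $\dot{B}^{0}_{pq}$. (The case $q = p$ is in any event immediate from $\dot{B}^{0}_{pp} = \dot{F}^{0}_{pp}$ and Theorem~\ref{ThTrib}(ii).)

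For part~(iii), the plan is to exhibit an explicit counterexample. The subcase $q = \infty$, $s > 0$ corresponds to $\dot{B}^{s}_{\infty\infty} \cong \Hold^s$ and is already contained in Theorem~\ref{ThTrib}(iii). For the remaining $(s, q)$ I would pick a lacunary collection $\mathcal{I} = \{[a_k, a_k + 1]\}_k$ with $a_k \to \infty$ fast, and construct a sequence of test functions whose $\dot{B}^{s}_{\infty q}$-norm stays of order~$1$ but whose image norm grows without bound. Without the compensating factor $e^{-2\pi i a_k x}$, the convolutions $f \ast \varphi_k$ carry the modulations $e^{2\pi i a_k x}$ at high frequency, so the dyadic block of $S\cii{\mathcal{I}}^{\varphi}f$ at scale $2^j$ sees precisely those $k$ with $a_k \sim 2^j$; the $l^2$-accumulation over $k$ then inflates the Besov norm. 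The non-degeneracy condition on~$\varphi$ is what prevents cancellation in the relevant scalar products. The main obstacle I anticipate is precisely this step: calibrating the test functions so that the input $\dot{B}^{s}_{\infty q}$-norm remains bounded uniformly in the number of active frequencies while the output norm blows up, and in a way that genuinely uses the non-degeneracy hypothesis rather than only an $L^\infty$-type bound.
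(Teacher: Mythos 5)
Your part~(i) matches the paper: real interpolation via~\eqref{RealInterp} between the $\dot{F}^{s_i}_{p\infty}$ spaces of Proposition~\ref{LowerBoundary}, with $p=\infty$ handled by collapse to $\dot{B}^s_{\infty\infty}=\dot{F}^s_{\infty\infty}$.

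For part~(ii) you have replaced a one-line argument with a heavy and dangerously circular one. The paper proves~(ii) directly: with $g_m = f\ast\varphi_m$ one has $g_m\ast\phi_j = (f\ast\phi_j)\ast\varphi_m$, so Rubio de Francia's $L^p$-theorem applied separately to each Littlewood--Paley piece gives
$\big\|\{g_m\ast\phi_j\}_m\big\|_{L^p(l^2)}\le C\,\|f\ast\phi_j\|_{L^p}$
uniformly in $j$; multiplying by $2^{sj}$ and taking the $l^q$-norm over $j$ is exactly Definition~\ref{Besov}. This covers every $q$, every $s\ge 0$, and needs no interpolation, no Riesz lift, and no endpoint chase. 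Your route invokes Theorem~\ref{ThTrib}(ii), but in the paper the proof of Theorem~\ref{ThTrib}(ii) itself \emph{uses} Theorem~\ref{ThBesov}(ii) (boundedness on $\dot{B}^s_{pp}=\dot{F}^s_{pp}$) as one of its two interpolation endpoints. So as written your argument is circular. It can be repaired — restrict your endpoints to $\dot{F}^0_{p2}=L^p$ and $\dot{F}^k_{p2}=\dot{W}^p_k$ with $k\in\mathbb{Z}^+$, which the paper handles directly without~\eqref{ThBesov}(ii) — but you never say so, and in any case this is a detour around an elementary computation.

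Part~(iii) is where the genuine gap lies, and you have already identified the step you cannot complete. Your plan takes intervals $[a_k,a_k+1]$ with $a_k\to\infty$ fast and hopes for $l^2$-accumulation at a fixed dyadic scale, but with $a_k$ lacunary at most one component $g_k$ sits in any given block $j$, so nothing accumulates — you would at best see a single term per $j$, which the $\dot{B}^s_{\infty q}$-norm handles without trouble. The paper's construction goes in the opposite direction: take
$\mathcal{I}=\big\{[1+2^m,\,1+2^{m+1}]\big\}_{m\in\mathbb{Z}_-}$,
a countable family of disjoint intervals shrinking toward the single frequency $\xi=1$ (this is allowed since only $0$ is excluded from the interiors by~\eqref{ZeroNotContained}). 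Test against a single bounded function $f_0=[\exp(2\pi i\,\cdot)\,\mathds{1}_{[0,\infty)}]\ast\phi_0$, which has only three nonzero Littlewood--Paley pieces and hence finite $\dot{B}^s_{\infty q}$-norm. Because all the intervals of $\mathcal I$ lie inside the support of $\widehat\phi_0$, every component $g_m=f_0\ast\varphi_m$ survives convolution with $\phi_0$ unchanged, so infinitely many components pile up in the single block $j=0$. A direct computation gives $g_m(0)=\Phi_\varphi(0)\neq 0$ for all small $m$, where $\Phi_\varphi(0)=\int_{-\infty}^0 e^{2\pi it}\varphi(t)\,dt$ — this is precisely where non-degeneracy enters — so the $l^2$-sum over $m$ at $x=0$ diverges and $\|S^\varphi_{\mathcal I}f_0\|_{X_*}=\infty$. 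The key idea you were missing is that the intervals should \emph{accumulate at a fixed nonzero frequency}, not disperse to infinity; accumulation at one Littlewood--Paley scale is what forces the $l^2$-norm of the output to blow up while the input stays in a fixed finite union of scales.
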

As we will see, there is a deep connection between Theorems~\ref{ThTrib} and~\ref{ThBesov}. The point is that in order to prove their first parts, we will, in fact, interpolate between the same spaces, but applying two different methods of interpolation. 

We also mention the following non-linear quadratic operator that transform scalar-valued functions to scalar-valued functions:
$$
	G\cii{\mathcal{I}}^{\varphi} f = \big|S\cii{\mathcal{I}}^{\varphi}f\big|_{l^2} = \big|\widetilde{S}\cii{\mathcal{I}}^{\varphi} f\big|_{l^2}
$$
It is a more ``rough'' operator: treating it, we deal with expressions 
of the form $\big||a|_{l^2}-|b|_{l^2}\big|$ instead of $|a-b|_{l^2}$ 
(it becomes clear what we mean if we put, e.g., $X = \Hold^s$). 
We also note that in order to study the operator~$G\cii{\mathcal{I}}^{\varphi}$, we should answer the question: 
can the sequences $S\cii{\mathcal{I}}^{\varphi}f$ or $\widetilde{S}\cii{\mathcal{I}}^{\varphi} f$ be presented 
as $l^2$-valued functions? We assume this neither in the definition of vector-spaced spaces~$X_*$ nor elsewhere above. 
Concerning this question, see, e.g., \cite[Fact~2.1]{Os}. Here we do not investigate problems related to the operator 
$G\cii{\mathcal{I}}^{\varphi}$ anymore.

\section{The proofs}
In order to prove parts~(i) and~(ii) of Theorems~\ref{ThTrib} and~\ref{ThBesov}, 
it suffices (due to Fact~\ref{N2}) to consider finite collections~$\mathcal{I}$ of intervals that determine the operators. In this case,
Fact~\ref{N1} allows to employ the whole theory of Triebel--Lizorkin and Besov spaces.

\subsection{Proof of Theorem~\ref{ThTrib}, part (i)}

	First, we consider the space~$\dot{F}^{k}_{22}$, $k \in \mathbb{Z}_+$, which coincides with~$\dot{W}^{2}_{k}$ (see Proposition~\ref{FIs}). 
	
	\begin{Le}\label{W2}
		If $X=\dot{F}^{k}_{22}$, $k\in\mathbb{Z}_+$, then the family~$\widetilde{\mathfrak{S}}_{\varphi}$ is uniformly bounded from~$X$ to~$X_*$.
	\end{Le}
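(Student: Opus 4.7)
The plan is to reduce everything to a direct Fourier-analytic computation, exploiting the identification $\dot F^k_{22}\cong\dot W^2_k$ and the fact that for the exponent pair $p=q=2$ the vector-valued norm simplifies dramatically.

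First I would observe that, by Proposition~\ref{FIs}(iii) and Plancherel, one has
\[
    \|f\|_{\dot F^{k}_{22}}^2 \cong \int_{\R} |\eta|^{2k}\,|\widehat f(\eta)|^2\,d\eta,
\]
and that for $p=q=2$ the $X_*$-norm collapses (again by Plancherel and Fubini) to
\[
    \big\|\{g_m\}\big\|_{X_*}^2 \cong \sum_m \int_{\R} |\xi|^{2k}\,|\widehat{g_m}(\xi)|^2\,d\xi.
\]
Thus the lemma reduces to showing $\sum_m\|g_m^{(k)}\|_{L^2}^2\lesssim\|f^{(k)}\|_{L^2}^2$, where $g_m(x)=e^{-2\pi i\,a_m x}(f\ast\varphi_m)(x)$.

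Next I would compute the Fourier side explicitly. Since translation on the physical side corresponds to modulation on the Fourier side, $\widehat{g_m}(\xi)=\widehat f(\xi+a_m)\,\widehat\varphi_m(\xi+a_m)$. Changing the variable $\eta=\xi+a_m$ gives
\[
    \|g_m^{(k)}\|_{L^2}^2 = (2\pi)^{2k}\int_{\R}(\eta-a_m)^{2k}\,|\widehat f(\eta)|^2\,|\widehat\varphi_m(\eta)|^2\,d\eta.
\]
Summing in $m$ and pulling the sum inside, it suffices to establish the pointwise bound
\[
    \sum_m (\eta-a_m)^{2k}\,|\widehat\varphi_m(\eta)|^2 \le C\,\eta^{2k}\quad\text{for a.e.\ }\eta\in\R,
\]
because disjointness of the intervals $I_m$ keeps at most one term nonzero at each $\eta$ and gives the trivial bound $\sum_m|\widehat\varphi_m(\eta)|^2\le\|\widehat\varphi\|_\infty^2$.

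The heart of the argument, and the only step that is not purely formal, is the pointwise inequality $(\eta-a_m)^{2k}|\widehat\varphi_m(\eta)|^2\lesssim\eta^{2k}$ on $\supp\widehat\varphi_m$. Here both hypotheses are needed: the condition $0\notin(a_m,b_m)$ forces each $I_m$ to lie entirely in $[0,\infty)$ or in $(-\infty,0]$, and the separation of $\supp\widehat\varphi$ from $\{0,1\}$ means $\supp\widehat\varphi_m\subset[a_m+\varepsilon L_m,\,b_m-\varepsilon L_m]$ with $L_m=b_m-a_m$ for some fixed $\varepsilon>0$. In the case $a_m\ge 0$ one simply has $0\le\eta-a_m\le\eta$ on this set. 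In the case $b_m\le 0$ one instead uses $|\eta|\ge\varepsilon L_m$ together with $|\eta-a_m|\le(1-\varepsilon)L_m$, so $|\eta-a_m|\le\varepsilon^{-1}|\eta|$. Either way the desired comparison holds with $C=\varepsilon^{-2k}$, and plugging this back completes the proof. The only potential obstacle is keeping track of the two sign cases for $I_m$; no deeper tool than Plancherel is needed.
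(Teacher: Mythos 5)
Your argument is correct and follows essentially the same route as the paper: identify $\dot F^k_{22}\cong\dot W^2_k$, pass to the Fourier side by Plancherel, modulate away $a_m$, and then prove the pointwise comparison $(\eta-a_m)^{2k}\,|\widehat\varphi_m(\eta)|^2\lesssim \eta^{2k}$ on $\supp\widehat\varphi_m$ by splitting the cases $a_m\ge 0$ and $b_m\le 0$, with disjointness of the $I_m$ doing the rest. The only divergence is cosmetic: in the case $b_m\le 0$ you invoke the separation of $\supp\widehat\varphi$ from $\{0,1\}$ to get $|\eta-a_m|\le\varepsilon^{-1}|\eta|$, whereas the paper instead uses only that $\widehat\varphi$ vanishes to infinite order at $1$ (and in fact remarks in a footnote that the separation of the support could also be used, which is exactly the variant you chose).
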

	\begin{proof}
	Let $f \in X$. We put 
	$$
		g_m(x) \df e^{-2\pi i\, a_m x}(f\ast\varphi_m)(x)
	$$ 
	(i.e., we have $\{g_m\} = \widetilde{S}\cii{\mathcal{I}}^{\varphi}f$), and
	by the Plancherel theorem, we can write
	\begin{equation}\label{Wl2}
		\big\|\big\{g_m^{(k)}\big\}\big\|_{L^2(l^2)}^2 = \sum_m\int\limits_0^{l_m} \big|\hat f(\xi + a_m)\big|^2\, \big|\widehat\varphi_m(\xi+a_m)\big|^2\, |\xi|^{2k} \,d\xi,
	\end{equation}
	where $l_m = b_m - a_m$.
	Rewrite each term with $a_m \ge 0$ as
	$$
		\int\limits_{I_m} \big|\hat f(\xi)\big|^2\, \big|\widehat\varphi_m(\xi)\big|^2\, |\xi-a_m|^{2k} \,d\xi.		
	$$
	In this case we have $|\xi - a_m| \le |\xi|$, and, therefore, it can be estimated by 
	$$
		C\!\int\limits_{I_m} \big|\hat f(\xi)\big|^2\, |\xi|^{2k} \,d\xi.
	$$
	
	For all the remaining terms in~\eqref{Wl2}, we have $b_m \le 0$, because $0 \notin (a_m,b_m)$ for all~$m$.
	In this case, 
	we rewrite the discussed terms as
	\begin{equation}\label{bterms}
		\int\limits_{-l_m}^0 \big|\hat f(\xi + b_m)\big|^2\, \big|\widehat\varphi_m(\xi+b_m)\big|^2\, |\xi+l_m|^{2k} \,d\xi
	\end{equation}
	and get rid of $l_m$ in the last factor.\footnote{Thus, we show that it is not significant whether we shift~$a_m$ or~$b_m$ to the origin.} For this we verify that for $\xi \in (-l_m, 0)$, we have
	\begin{equation}\label{atob}
		\big|\widehat\varphi_m(\xi+b_m)\big|^2\, |\xi+l_m|^{2k} \le C_k|\xi|^{2k},
	\end{equation}
	where $C_k$ does not depend on~$m$.
	We have
	$
	\widehat\varphi_m(\xi+b_m)	= \widehat\varphi(1+\xi/l_m),
	$
	and in order to prove~\eqref{atob}, we only need to verify that 
	$$
		\widehat\varphi(1-t) \le C_k\bigg(\frac{t}{1-t}\bigg)^k \quad\mbox{for}\quad t \in (0,1).
	$$
	But this is true because $\widehat\varphi$ equals zero at $1$ with all its derivatives.\footnote{We could also use the fact 
	that $\supp\widehat\varphi$ is separated from~$0$ and~$1$, but we do not need this restriction in order to prove Lemma~\ref{W2}.} 
	Thus, we have that~\eqref{bterms} can be estimated by
	$$
		C\!\!\int\limits_{-l_m}^0 \big|\hat f(\xi + b_m)\big|^2\, |\xi|^{2k} \,d\xi = C\!\int\limits_{I_m} \big|\hat f(\xi)\big|^2\, |\xi - b_m|^{2k} \,d\xi.
	$$
	But since we consider the terms where~$b_m\le 0$, the last expressions are lesser than
	$$
		C\!\int\limits_{I_m} \big|\hat f(\xi)\big|^2\, |\xi|^{2k} \,d\xi.
	$$
	Combining it all together, at least for finite collections~$\mathcal{I}$ we obtain
	\begin{equation*}
		\big\|\widetilde{S}\cii{\mathcal{I}}^{\varphi}f\big\|_{X_*}^2  \cong
		\big\|\big\{g_m^{(k)}\big\}\big\|_{L^2(l^2)}^2 \le 
		C\sum_m \int\limits_{I_m} \big|\hat f(\xi)\big|^2\, |\xi|^{2k} \,d\xi \le C\|f\|_{X}^2.
	\end{equation*}
	Due to Fact~\ref{N2}, the lemma is proved.
\end{proof}		
		
		We know that 
		$\widetilde{\mathfrak{S}}_{\varphi}$ is bounded on 
	$\dot{F}^{0}_{\infty 2} \cong \BMO$ (see Proposition~\ref{BMO}) and on $\dot{F}^{s}_{\infty \infty} \cong \Hold^s$ as well (see Proposition~\ref{LowerBoundary}). 

Finally, let $2\le p < \infty$.  
We have the boundedness on $\dot{F}^{0}_{p2} \cong L^p$ 
(see Rubio de Francia's~\cite{Ru} theorem), on $\dot{F}^{k}_{22}$ for $k\in\mathbb{Z}_+$
(see Lemma~\ref{W2} just proved), and on $\dot{F}_{p\infty}^{s}$ for $s>0$ (see Proposition~\ref{LowerBoundary}).
Using the complex interpolation method~\eqref{CompInterp} for the couples 
$\big\{\dot{F}^{0}_{p2},\,\dot{F}^{k}_{22}\big\}$ and 
$\big\{\dot{F}^{0}_{p2},\dot{F}_{p\infty}^{s}\}$,
we come to the desired result (see Figure~\ref{interp}).
\begin{figure}
\includegraphics[scale = 0.765]{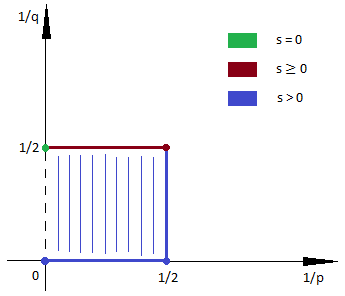}
\caption{Part (i) of Theorem~\ref{ThTrib}.}
\label{interp}
\end{figure}

\subsection{Proof of Theorem~\ref{ThBesov}, part (i)}

We already know that $\widetilde{\mathfrak{S}}_{\varphi}$ is uniformly bounded on $\dot{B}^{s}_{\infty\infty} = \Hold^s$. 
For the remaining spaces we can use the real interpolation method~\eqref{RealInterp}. Indeed, suppose $2 \le p<\infty$ and $s_0,s_1 >0$. Then part~(i) of Theorem~\ref{ThTrib} implies that if we take 
$\big\{\dot{F}^{s_0}_{p\infty},\, \dot{F}^{s_1}_{p\infty}\big\}$ or $\big\{\dot{F}^{s_0}_{p2},\, \dot{F}^{s_1}_{p2}\big\}$ as an interpolation couple, we come to 
the desired result (see Figure~\ref{InterpTildeSReal}).
\begin{figure}
\includegraphics[scale = 0.765]{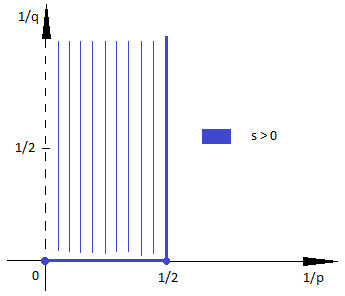}
\caption{Part (i) of Theorem~\ref{ThBesov}.}
\label{InterpTildeSReal}
\end{figure}

\subsection{Proof of Theorem~\ref{ThBesov}, part (ii)}

Denote $g_m \df f\ast\varphi_m$. 
By Rubio de Francia's~\cite{Ru} theorem, we have
$$
	\big\|\{g_m\ast\phi_j\}\Cii{m}\big\|_{L^p(l^2)} = \big\|\{f\ast\phi_j\ast \varphi_m\}\Cii{m}\big\|_{L^p(l^2)} \le C\|f\ast\phi_j\|\Cii{L^p}
$$
and multiplying by  $2^{sj}$ and taking $l^q$ norms we obtain the required boundedness. 
	
\subsection{Proof of Theorem~\ref{ThTrib}, part (ii)}

First, consider the spaces
$$
	X = \dot{F}^k_{p 2} \cong \dot{W}^{p}_k,\quad k \in \mathbb{Z}^+, \quad 2 \le p < \infty. 
$$	
Suppose $f \in X$. By Rubio de Francia's~\cite{Ru} theorem, we have
$$
	\big\|\big\{(f\ast\varphi_m)^{(k)}\big\}\big\|_{L^p(l^2)} = \big\|\big\{f^{(k)}\ast\varphi_m\big\}\big\|_{L^p(l^2)} = \big\|{S}\cii{\mathcal{I}}^{\varphi}\big[f^{(k)}\big]\big\|_{L^p(l^2)} \le C\|f\|_X.
$$
Therefore, in the case being considered, we have the desired result. 

But due to part~(ii) of Theorem~\ref{ThBesov}, we also know that~${\mathfrak{S}}_{\varphi}$ is uniformly bounded on the spaces 
$$
	X = \dot{F}^s_{pp} = \dot{B}^s_{pp},\quad 2 \le p < \infty,\quad s \ge 0.
$$
Using the complex interpolation method~\eqref{CompInterp} (also see Figure~\ref{InterpSComplex}), we conclude the proof.
\begin{figure}
\includegraphics[scale = 0.765]{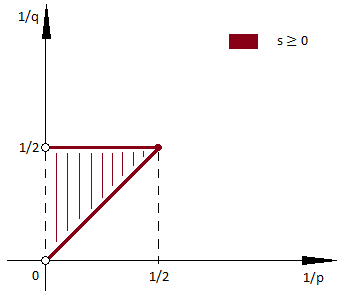}
\caption{Part (ii) of Theorem~\ref{ThTrib}.}
\label{InterpSComplex}
\end{figure}

\subsection{Proof of Theorem~\ref{ThBesov}, part (iii)}
Suppose that $\varphi$ is non-degenerate and set 
$$
\I=\big\{[1+2^{m},\,1+2^{m+1}]\big\}_{m\in\Z_-}.
$$ 

Now consider our functions~$\phi_j$ that are generated by the function~$\phi$ and form a resolution of unity (see \eqref{unity}).
Due to~\eqref{phijsupp} we have 
$$
	\supp \widehat{\phi}_{0} \subset [-2,\,-1/2]\cup [1/2,\,2].
$$
Without loss of generality we can additionally assume that $\widehat \phi \equiv 1$ on $[-3/2,\,3/2]$. Then we also have
\begin{equation}\label{phi0eq1}
\widehat{\phi}_{0} \equiv 1\quad\mbox{on}\quad [-{3}/{2},\, -1] \cup [1,\, {3}/{2}].
\end{equation}

We define
\[f_0 \df \big[\exp(2\pi i \,\cdot) \,\mathds{1}_{[0,+\infty)}(\cdot)\big]\ast{\phi}_{0}.\]  
By~\eqref{phijsupp} the function
$
	f_0 \ast \phi_j
$
does not vanish
only if $j = -1,\,0,\,1$.
This fact, together with Definition~\ref{Besov} and the obvious estimate $|f_0 \ast \phi_j| \le C\|f_0\|\Cii{L^\infty}$, implies
$$
	\|f_0\|\Cii{X} \le C\,\|f_0\|\Cii{L^\infty} < \infty.
$$

Next, we set 
$$
	g_{m} \df f_0\ast\varphi_{m} \quad\mbox{and}\quad g_{m}^j \df g_m\ast \phi_j,
$$ 
where $\varphi_{m}$ is the function that corresponds (see~\eqref{phim}) to the interval $[1+2^m, 1+2^{m+1}]$: 
\[\varphi_{m}(x)=\exp\big(2\pi i\, (1+2^m)x\big)\,2^m\varphi(2^m x).\]
If $2^{m+1}\le 1/2$, then due to~\eqref{phi0eq1} we have
\begin{align*}
	g_{m}^{0} &= \big[\exp(2\pi i \,\cdot) \,\mathds{1}_{[0,+\infty)}(\cdot)\big]\ast\phi_0 \ast \phi_{0} \ast \varphi_{m} \\
	&= \big[\exp(2\pi i \,\cdot) \,\mathds{1}_{[0,+\infty)}(\cdot)\big]\ast\varphi_{m} = g_m.
\end{align*}
Therefore, in this case we can write
\begin{equation}\label{eq:g}
\begin{aligned}
	g_{m}^{0}(x) = g_m(x)
	&= e^{2\pi i x}\int\limits_{-\infty}^{2^mx}e^{2\pi i t}\varphi(t)\,dt\\
	&=e^{2\pi i x}\Phi_{\varphi}(0)+e^{2\pi ix}\big(\Phi_{\varphi}(2^{m}x)-\Phi_{\varphi}(0)\big),
\end{aligned}
\end{equation}
where $\Phi_{\varphi}(x)=\int_{-\infty}^x e^{2\pi i t}\varphi(t)\,dt$. Since $\varphi$ is non-degenerate (see Definition~\ref{nondeg}), we have 
$$
	g_{m}^{0}(0)=\Phi_\varphi(0)\neq 0
$$ 
provided $2^{m+1}\le 1/2$. We also note that $g_{m}^{0}$ are continuous functions.
We have
$$
		\big\|S^\varphi_{\I} f_0\big\|_{X_*} = \bigg|\Big\{2^{js}\big\|\big\{g_{m}^{j}\big\}_{m}\big\|_{L^{\infty}(l^2)}\Big\}_{j}\bigg|_{l^q}
		\ge \big\|\big\{g_{m}^{0}\big\}_m\big\|_{L^{\infty}(l^2)}=+\infty.
	$$

\subsection{Proof of Theorem~\ref{ThTrib}, part (iii)}
By definition~\eqref{FInfty} we have $\dot{F}_{\infty\infty}^s=\dot{B}_{\infty\infty}^s$, $s>0$, and it remains to prove the statement 
for $X = \dot{F}^0_{\infty 2}\cong\BMO$. We show that the same example as above gives an unbounded operator from $X$ to~$X_*$. 

Consider $\{g_m\}_m$ as an element of the quotient space~$X_*$. 
It is clear that $g_{m}(x)$ are bounded by $\|\varphi\|_{1}$ uniformly in $m$ and $x$. Therefore, for a sequence~$\{P_m\}_m$ of polynomials, the expression
$$
	\big\|\{g_{m}-P_m\}_m\big\|_{\BMO(l^2)}
$$ 
could be finite only if $\deg P_m\le 1$ for all~$m$. 

Next, suppose $2^{m+1}\le 1/2$ and $x \in [0,\, 1/2]$. Then by \eqref{eq:g}, we obtain
\[
|g_{m}(x)-P_m(x)-c_m|\ge|\Phi_\varphi(0)||\exp(2\pi ix)-a_mx-b_m|-|\Phi_\varphi(2^mx)-\Phi_\varphi(0)|.
\]
Then there exist subsets $E_m \subset [0,\,1/2]$ and a number $\gamma>0$ such that $|E_m|>\gamma$ and
\[
|g_{m}-P_m-c_m|>c_0|\Phi_\varphi(0)|\quad\mbox{on}\quad E_m
\]
provided $m$ is small enough.
Then
\[
\big\|\{g_{m}-P_m\}_m\big\|_{\BMO(l^2)}=\infty.\]

\end{document}